\newtheoremstyle{theoremstyle}
  {10pt}      
  {5pt}       
  {\itshape}  
  {}          
  {\bfseries} 
  {:}         
  {.5em}      
  {}          
\newtheoremstyle{examplestyle}
  {10pt}      
  {5pt}       
  {}          
  {}          
  {\bfseries} 
  {:}         
  {.5em}      
  {}          
\theoremstyle{theoremstyle}
\newtheorem{theorem}{Theorem}[section]
\newtheorem*{theorem*}{Theorem}
\newtheorem{lemma}[theorem]{Lemma}
\newtheorem{proposition}[theorem]{Proposition}
\newtheorem*{proposition*}{Proposition}
\newtheorem{corollary}[theorem]{Corollary}
\newtheorem*{corollary*}{Corollary}
\newtheorem{example}[theorem]{Example}
\newtheorem{definition}[theorem]{Definition}
\newtheorem{definition*}{Definition}
\newtheorem{remark}[theorem]{Remark}
\newtheorem{remark*}{Remark}
\newcommand{\caC}{{\mathcal C}}
\newcommand{\caD}{{\mathcal D}}
\newcommand{\caS}{{\mathcal S}}
\newcommand{\caO}{{\mathcal O}}
\newcommand{\integers}{{\mathbb Z}}
\newcommand{\Ho}{\mathsf{Ho}}
\newcommand{\Hom}{\mathrm{Hom}}
\newcommand{\Spt}{\mathsf{Spt}}
\newcommand{\unit}{\mathbf{1}}
\newcommand{\Mod}{{\mathrm{Mod}}}
\newcommand{\J}{\mathcal{J}}
\newcommand{\m}{\underline{\mathrm{m}}}
\newcommand{\n}{\underline{\mathrm{n}}}
\renewcommand{\k}{\underline{\mathrm{k}}}
\renewcommand{\l}{\underline{\mathrm{l}}}
\renewcommand{\i}{\underline{\mathrm{i}}}
\newcommand{\p}{\underline{\mathrm{p}}}
\newcommand{\Set}{\mathsf{Set}}
\newcommand{\sSet}{\mathsf{sSet}}
\newcommand{\E}{\mathsf{E}}
\newcommand{\sS}{\mathscr{S}}
\newcommand{\mot}{\mathrm{mot}}
\newcommand{\cC}{\mathscr{C}}
\newcommand{\cD}{\mathscr{D}}
\newcommand{\Map}{\mathrm{Map}}
\title{{\bf Another viewpoint on $\J$-spaces}}
\author{Markus Spitzweck}
\date{\today}
\begin{document}
\maketitle
\begin{abstract}
We give an interpretation of $\J$-spaces in terms of symmetric spectra
in symmetric sequences. As application we show how one can define
graded endomorphism objects in a general situation. As example we discuss
the motivic bigraded endomorphisms of a motivic $E_\infty$-ring spectrum.
Finally we give an $\infty$-categorical interpretation of our result.
\end{abstract}

\section{Introduction}

The category of $\J$-spaces has been introduced in \cite{sagave-schlichtkrull}
to figure as a suitable target for the graded units of an $E_\infty$-ring
spectrum. Recall the units $\mathrm{GL}_1 (\E)$ of a fibrant $\E_\infty$-ring spectrum $\E$ are defined
to be $\Omega^\infty(\E)^\times$, i.e. those connected components of
$\Omega^\infty(\E)$ which are invertible components for the $E_\infty$-structure
on $\Omega^\infty(\E)$ coming from that one on $\E$.

Equivalently one can view $\mathrm{GL}_1 (\E)$ as the space of $\E$-module
automorphisms from $\E$ to itself. This concept does not capture
the possibility of graded automorphisms of $\E$, i.e. self-equivalences
from a (positive or negative) suspension of $\E$ to $\E$.

In particular the canonical map $\mathrm{GL}_1(\mathsf{e}) \to \mathrm{GL}_1 (\E)$
is always an equivalence, where $\mathsf{e} \to \E$ is the connected cover.

The need for some graded version of $\mathrm{GL}_1 (\E)$ stems in particular  
from the theory of topological logarithmic structures, see \cite{rognes.logarithmic}.

In \cite{sagave-schlichtkrull} the graded units of an $E_\infty$-ring spectrum
manifest themselves as {\em $\J$-spaces}. In loc.~cit.~it is discussed that
only in the context of {\em graded} topological logarithmic structures
there is an interesting such structure on the connective cover of the topological
complex $K$-theory spectrum coming from the Bott element.

In this note we discuss $\J$-objects in general (symmetric monoidal) categories
and model categories. Our main result (Proposition \ref{bgdedd}) says that the category of $\J$-objects
is equivalent to that one of symmetric $T$-spectra, where $T$ is a particular
symmetric sequence.
In the proof we introduce as intermediate step a structure which we call $T$-data.
The author was informed that these data were in fact the
first manifestation of $\J$spaces Sagave-Schlichtkrull used.

Our applications are mainly for motivic $\E_\infty$-spectra,
we show how to obtain bigraded versions of the motivic endomorphisms of such a spectrum.

We also discuss the $\infty$-categorical content of our result.

\vskip.3cm

{\bf Acknowledgements:} This note grew out of a talk given by Christian Schlichtkrull
at the National Topology Symposium in Fredrikstad 2010. I thank Steffen Sagave and
Christian Schlichtkrull for interesting correspondence on the subject.
Also I thank Peter Arndt for useful discussions.

\section{$\J$-spaces as symmetric spectra}
\label{hgrfddd}

For any natural number $n$ we set $\n=\{1,\ldots,n\}$.
As in \cite{sagave-schlichtkrull} we denote by $\J$ the following category:
objects are pairs $(\m,\n)$, morphisms from $(\m,\n)$ to $(\k,\l)$
are triples $(\varphi,\psi,\alpha)$ where $\varphi \colon \m \to \k$
and $\psi \colon \n \to \l$ are injections and $\alpha \colon
(\k - \varphi(\m)) \to (\l - \psi(\n))$ is a bijection. 
Composition takes as bijection the disjoint union of the induced bijections.
$\J$ is symmetric monoidal, where on objects the tensor product is
concatenation. The symmetry isomorphism involves the shuffle permutation.

Let $\caC$ be a symmetric monoidal category with all colimits.
We equip the category of symmetric sequences $\caC^\Sigma$
and the category of $\J$-objects $\caC^\J$
with the Day convolution tensor product.
We let $T=T_\caC=(\emptyset,\unit,\emptyset,\emptyset,\ldots)$
be the symmetric sequence in $\caC$ where the tensor unit sits in degree one,
and in every other degree the initial object.
We let $\Spt_T^\Sigma(\caC^\Sigma)$ be the symmetric monoidal category
of symmetric $T$-spectra in $\caC^\Sigma$. By definition this is the category
of modules over the commutative monoid $\mathrm{Sym}(T)=(\unit,T,T^{\otimes 2}, T^{\otimes 3}, \ldots)$
in $(\caC^\Sigma)^\Sigma$.

\begin{proposition} \label{bgdedd}
The categories $\caC^\J$ and $\Spt_T^\Sigma(\caC^\Sigma)$ are
naturally equivalent as symmetric monoidal categories.
\end{proposition}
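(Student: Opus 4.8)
The plan is to unwind both sides into the same piece of combinatorial data --- which, following the terminology announced in the introduction, I will call a \emph{$T$-datum} --- and then to check that the resulting equivalence is symmetric monoidal. On the spectrum side the first move is to replace $(\caC^\Sigma)^\Sigma$, the category of functors $\Sigma\times\Sigma\to\caC$, by $\caC^{\Sigma\times\Sigma}$: by a routine Fubini argument for the defining coends the twice-iterated Day convolution coincides with the single Day convolution for the componentwise symmetric monoidal structure on $\Sigma\times\Sigma$, so this is an equivalence of symmetric monoidal categories. Under it I would compute $\mathrm{Sym}(T)$ explicitly: the inner Day power $T^{\otimes k}$ is concentrated in degree $k$ with value a coproduct of $|\Sigma_k|$ copies of $\unit$ carrying the two commuting regular actions, so $\mathrm{Sym}(T)$ is the functor $\Sigma\times\Sigma\to\caC$ sending $(\m,\n)$ to such a coproduct when $m=n$ and to the initial object otherwise, with $\Sigma_m\times\Sigma_n=\Sigma_n\times\Sigma_n$ acting by left and right translation. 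Unwinding a module $M$ over this commutative monoid then yields: a family $M(\m,\n)$ with commuting $\Sigma_m\times\Sigma_n$-actions, together with an action map whose degree-$p$ component is, via the induction--restriction adjunction, a $\Sigma_m\times\Sigma_n$-equivariant morphism $M(\m,\n)\to M(\m\sqcup\p,\n\sqcup\p)$ which is invariant under the diagonal copy of $\Sigma_p$ permuting the last $p$ entries of both indices; associativity forces the degree-$p$ component to be the $p$-fold iterate of the degree-$1$ one, so the module structure is exactly this invariance. This bundle of data --- objects with $\Sigma_m\times\Sigma_n$-action, an equivariant structure map raising both indices by one, and the diagonal $\Sigma_p$-invariance of its iterates --- I take as the definition of $T$-data, and the above shows $\Spt_T^\Sigma(\caC^\Sigma)\simeq\{T\text{-data}\}$, with the tensor product of $T$-data inherited from the relative tensor over $\mathrm{Sym}(T)$.

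The combinatorial core is then the identification of $\caC^\J$ with $T$-data, for which the key fact is that \emph{$\J$ is the free symmetric monoidal category on two objects $a=(\mathbf{1},\emptyset)$, $b=(\emptyset,\mathbf{1})$ and one morphism $\iota\colon(\emptyset,\emptyset)\to a\otimes b$}. Every object of $\J$ is uniquely the tensor $a^{\otimes m}\otimes b^{\otimes n}=(\m,\n)$, and unravelling the morphisms of the free symmetric monoidal category on $(a,b,\iota)$ reproduces exactly the triples $(\varphi,\psi,\alpha)$: the two injections record which source $a$- and $b$-slots survive into the target, the bijection records which of the $p$ occurrences of $\iota$ created which of the $p$ new pairs, and composition together with the shuffle symmetry matches on the nose. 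Granting this, a functor $F\colon\J\to\caC$ is precisely a family $F(\m,\n)$ with $\Sigma_m\times\Sigma_n$-action together with the equivariant structure map $F(\m,\n)\to F(\m\sqcup\p,\n\sqcup\p)$ obtained from $\id\otimes\iota$ in the case $p=1$; and functoriality on the degree-$p$ morphism $\id\otimes\iota^{\otimes p}$ --- where $\Sigma_p$ permutes the $p$ tensor factors of $\iota^{\otimes p}$ and therefore acts diagonally on $a^{\otimes p}\otimes b^{\otimes p}$ --- is exactly the diagonal $\Sigma_p$-invariance recorded above. This gives $\caC^\J\simeq\{T\text{-data}\}$, and hence $\caC^\J\simeq\Spt_T^\Sigma(\caC^\Sigma)$.

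It remains to see the composite equivalence is symmetric monoidal. On $\caC^\J$ the tensor product is Day convolution for concatenation in $\J$; under the free-symmetric-monoidal-category description this corresponds on the spectrum side to the relative tensor over $\mathrm{Sym}(T)$, since concatenating the generators $a,b$ matches the multiplication in $(\caC^\Sigma)^\Sigma$, the morphism $\iota$ matches the unit $T\to\mathrm{Sym}(T)$ of the free commutative monoid, the unit $(\emptyset,\emptyset)$ matches $\mathrm{Sym}(T)$ itself, and the shuffle symmetry of $\J$ matches the symmetry of the tensor of $T$-spectra. I expect the genuine work, and the place it is easiest to slip, to be twofold: first, the precise proof that $\J$ is the free symmetric monoidal category on $(a,b,\iota)$, i.e.\ the bookkeeping with finite sets, injections and shuffles showing that no morphism is missing and no relation is unaccounted for; and second, keeping all the $\Sigma_p$-actions straight --- the left versus right regular representations inside $\mathrm{Sym}(T)$, the diagonal embedding $\Sigma_p\hookrightarrow\Sigma_{m+p}\times\Sigma_{n+p}$, and the shuffles appearing both in the definition of $\J$ and in the Day convolutions --- because all the structure in the statement is carried by exactly these finite-group actions, and a single dropped twist would break the equivalence.
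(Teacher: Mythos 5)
Your proposal is correct and shares the paper's overall strategy: both sides are unwound into the same intermediate structure of $T$-data (objects $X_{m,n}$ with $\Sigma_m\times\Sigma_n$-action, an equivariant map raising both indices by one, and $\Sigma_p$-invariance of the $p$-fold iterates under the diagonal embedding $\Sigma_p\hookrightarrow\Sigma_{m+p}\times\Sigma_{n+p}$), and your unwinding of a $\mathrm{Sym}(T)$-module agrees with the paper's. Where you diverge is in how the two comparisons are justified. For the identification $\caC^\J\simeq\{T\text{-data}\}$ the paper works by hand: it shows every morphism $(\m,\n)\to(\m\sqcup\p,\n\sqcup\p)$ in $\J$ factors as $(a,b)\circ\Psi_{m,n,p}$ with $(a,b)\in\Sigma_{m+p}\times\Sigma_{n+p}$ unique up to precomposition by the diagonal $\Sigma_p$, and checks directly that this is exactly the $T$-datum condition. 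You instead package the same combinatorics as the statement that $\J$ is the free symmetric monoidal category on two objects $a,b$ and a morphism $\iota\colon\unit\to a\otimes b$; this is true (the morphism count $(\Sigma_{m+p}\times\Sigma_{n+p})/\Sigma_p$ matches the triples $(\varphi,\psi,\alpha)$) and is arguably the more conceptual formulation, but as you yourself note it carries the entire combinatorial burden, and that burden is precisely the decomposition the paper verifies explicitly --- so you have relocated rather than avoided the work. For the symmetric monoidal compatibility the paper gives a formal argument: the restriction $j^*$ along $j\colon\Sigma^2\to\J$ sends $\unit_{\caC^\J}$ to $\mathrm{Sym}(T)$, and the left adjoint $\Mod(\mathrm{Sym}(T))\to\caC^\J$ factors as $\Mod(j^*\unit)\to\Mod(j_!j^*\unit)\to\caC^\J$, a composite of symmetric monoidal functors. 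Your treatment of this step is the thinnest part of the proposal --- ``concatenating the generators matches the multiplication'' asserts the compatibility rather than proving it --- and if you want to close that gap the paper's adjunction argument (or, equivalently, the observation that both tensor products are left Kan extensions of the same bifunctor along the same monoidal structure map) is the clean way to do it. Your explicit computation of $\mathrm{Sym}(T)$ as the bisymmetric sequence concentrated on the diagonal with the two regular $\Sigma_k$-actions is the same identification the paper reaches as $j^*(\unit_{\caC^\J})\cong\mathrm{Sym}(T)$, just derived from the other end.
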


\begin{proof}
An object in $\Spt_T^\Sigma(\caC^\Sigma)$ is a symmetric sequence
$(X_0,X_1,\ldots)$ in $\caC^\Sigma$ together with bonding maps
$X_i \otimes T \to X_{i+1}$ such that the iterates
$$X_i \otimes T^{\otimes p} \to X_{i+p}$$
are $\Sigma_i \times \Sigma_p$-equivariant.

The functor $X \mapsto X \otimes T$ on $\caC^\Sigma$ has a right adjoint, denoted
$X \mapsto X^T$. We have $(X^T)_n= X_{n+1}$ with the $\Sigma_n$-action
induced by the $\Sigma_{n+1}$-action on $X_{n+1}$ via the natural
inclusion $\Sigma_n \to \Sigma_{n+1}$.

Thus the bonding map $X_i \otimes T \to X_{i+1}$ of a $T$-spectrum
$X$ is adjoint to a map $X_i \to X_{i+1}^T$. Such a map is given
by a family of maps $\varphi_{i,n} \colon X_{i,n} \to X_{i+1,n+1}$, such that the $n$-th map is
$\Sigma_n$-equivariant. Moreover, the equivariance of the iterated bonding
maps translates to the following statement: the composition
$$\Phi_{i,n,p}:= \varphi_{i+p-1,n+p-1} \circ \cdots \circ \varphi_{i,n} \colon X_{i,n} \to X_{i+p,n+p}$$
is $\Sigma_i$-equivariant and for each $g \in \Sigma_p$ we have $g \circ \Phi_{i,n,p}=\Phi_{i,n,p}$.
Here $g$ acts on $X_{i+p,n+p}$ via the inclusion
$\iota_{i,n,p} \colon \Sigma_p \to \Sigma_{i+p} \times \Sigma_{n+p}$
which is the product of the maps from $\Sigma_p$ into $\Sigma_{i+p}$ and $\Sigma_{n+p}$
which permute the last $p$ elements.

So a symmetric $T$-spectrum amounts to the following data:

objects $X_{i,n} \in \caC$ with a $\Sigma_i \times \Sigma_n$-action,
$\Sigma_i \times \Sigma_n$-equivariant maps $\varphi_{i,n} \colon X_{i,n} \to X_{i+1,n+1}$ such that
their iterates $\Phi_{i,n,p} \colon X_{i,n} \to X_{i+p,n+p}$ obey the condition $g \circ \Phi_{i,n,p}=\Phi_{i,n,p}$
for all $g \in \Sigma_p$ (where we use the above inclusion $\iota_{i,n,p}$).
We call such a datum a $T$-datum. $T$-data form a category equivalent to $T$-spectra.

We have to see that a $T$-datum is equivalent to a functor $\J \to \caC$.

Let be given a $T$-datum (in the notation as above).
Of course it is clear what the functor $\J \to \caC$ should do on objects, it should send
$(\i,\n)$ to $X_{i,n}$.

Let $(\varphi,\psi,\alpha) \colon (\i,\n) \to (\i \sqcup \p,\n \sqcup \p)$ be a map in $\J$.
We denote by $\Psi_{i,n,p} \colon (\i,\n) \to (\i \sqcup \p,\n \sqcup \p)$
the standard map, i.e. sending $\i$ to $\i$ via the identity map, similarly for $\n$, and
the required bijection is induced from the identity on $\p$. Then there is a
$(a,b) \in \Sigma_{i+p} \times \Sigma_{n+p}$ such that $(\varphi,\psi,\alpha)=
(a,b) \circ \Psi_{i,n,p}$. Moreover such a $(a,b)$ is unique with this property up to
precomposition with a $g \in \Sigma_p$ which acts via the inclusion $\iota_{i,n,p}$.

The image of the functor $\J \to \caC$ we want to define on the map
$(\varphi,\psi,\alpha)$ is defined to be $(a,b) \circ \Phi_{i,n,p}$. This is independent
of the possible choices for $(a,b)$ because of the last property of a $T$-datum.

This assignment indeed defines a functor $\J \to \caC$ because of the equivariance
of the maps in a $T$-datum.
One extends this to a functor from the category of $T$-data to the functor category
$\caC^\J$.

On the other hand starting with a functor $f \colon \J \to \caC$ defines a $T$-datum
by setting $X_{i,n}:= f((\i,\n))$ with the induced $\Sigma_i \times \Sigma_n$-action.
The maps $X_{i,n} \to X_{i+1,n+1}$ are defined to be the $f(\Psi_{i,n,1})$.
These are clearly $\Sigma_i \times \Sigma_n$-equivariant. Moreover the additional condition
on the $\Sigma_p$-invariance follows since in $\J$ we have the identity
$g \circ \Psi_{i,n,p}= \Psi_{i,n,p}$ for $g \in \Sigma_p$ acting via the inclusion $\iota_{i,n,p}$.

This assignment extends to a functor from $\caC^\J$ to $T$-data.

The two functors defined are clearly inverse to each other.

We have to see that these functors preserve the tensor product.

For that we describe the functor $\caC^\J \to \Spt_T^\Sigma(\caC^\Sigma)$ in different
terms. Let $j \colon \Sigma^2 \to \J$ be the embedding.
The functor $j$ induces a symmetric monoidal functor
$j_! \colon \caC^{\Sigma^2} \to \caC^\J$ with right adjoint
$j^*$. This adjunction induces an adjunction $\Mod(j^*(\unit_{\caC^\J})) \rightleftarrows \caC^\J$.
The left adjoint of this adjunction is given by the factorization
$$\Mod(j^*(\unit_{\caC^\J})) \to \Mod(j_!(j^*(\unit_{\caC^\J}))) \to \caC^\J,$$
where the last functor is given by push forward along the canonical map
of commutative monoids $j_!(j^*(\unit_{\caC^\J})) \to \unit_{\caC^\J}$.
Thus this left adjoint is also symmetric monoidal.

The unit $\unit_{\caC^\J}$ is given by $\Hom_\J ((\underline{0},\underline{0}), \_) \times \unit_\caC$.
Thus $\unit_{\caC^\J}(\m,\n)=\Sigma_n \times \unit_\caC$ if $m=n$ and the
initial object otherwise. It is easy to see that $j^*(\unit_{\caC^\J})$
is canonically isomorphic to $\mathrm{Sym}(T)$ as commutative monoids.
Moreover the canonical functor
$$\caC^\J \to \Mod(j^*(\unit_{\caC^\J})) \simeq \Mod(\mathrm{Sym}(T))
=\Spt_T^\Sigma(\caC^\Sigma)$$
is seen to be the functor desribed in the first part of this proof.
But we have already seen that its left adjoint is symmetric monoidal.
This finishes the proof.
\end{proof}

\begin{remark}
Suppose $\caC$ is a category with (set-indexed) coproducts. Then it is still
possible to desribe $\caC^\J$ in terms of symmetric spectra.
Namely, $(\caC^\Sigma)^\Sigma$ is tensored over $(\Set^\Sigma)^\Sigma$,
and $\caC^\J$ is equivalent to $\Mod_\caC(\mathrm{Sym}(T_\Set))$.
\end{remark}

Let now $\caC$ and $\caD$ be cocomplete symmetric monoidal categories
and $f \colon \caC \to \caD$ a symmetric monoidal functor.
Suppose $f$ is cocontinuous and the tensor product of $\caD$
commutes with colimits separately in each variable.
Let $K \in \caD$. Then there is a canonical cocontinuous symmetric monoidal functor
$f^K \colon \caC^\Sigma \to \caD$ which sends $T_\caC$ to $K$ and prolongs $f$. In formulas
it is given by
$$f^K((X_0,X_1,X_2,\ldots))= \coprod_{n \ge 0} f(X_n) \otimes_{\Sigma_n} K^{\otimes n}.$$
This functor extends to spectra
$$f^K_\Spt \colon \caC^\J \simeq \Spt_T^\Sigma(\caC^\Sigma) \to \Spt_K^\Sigma(\caD).$$

\begin{example} \label{hgfdsd}
Let $\caS$ be the category of spaces, i.e. either topological spaces
or simplicial sets and $\caS_\bullet$ the category of pointed spaces.
Let $f \colon \caS \to \caS_\bullet$ be the functor which adds a basepoint.
Then the induced functor $$f^{S^1}_\Spt \colon \caS^\J \to \Spt_{S^1}^\Sigma(\caS_\bullet)$$
is the functor $\mathbb{S}^\J[-]$ of \cite{sagave-schlichtkrull}.
\end{example}

\section{Model structures}
\label{gdsfgn}

We suppose now that $\caC$ and $\caD$ are left proper cellular or combinatorial symmetric
monoidal model categories and $f \colon \caC \to \caD$ is a symmetric monoidal left Quillen
functor. 
We suppose the unit in $\caC$ is cofibrant and $K \in \caD$ is cofibrant.
We equip the above mentioned categories of spectra
$\Spt_T^\Sigma(\caC^\Sigma)$ and $\Spt_K^\Sigma(\caD)$ with the stable model
stuctures of \cite{hovey.stable}. 
We have to verify that the required localizations exist in the combinatorial case.
Therefore we have to see that the categories $\Mod(\mathrm{Sym}(T))$ and/or
$\Mod(\mathrm{Sym}(K))$ are locally presentable.
This follows from \cite[Corollary 2.3.8.(1)]{lurie.DAGII}.

By transfer of structure the stable model structure on $\Spt_T^\Sigma(\caC^\Sigma)$
induces a model structure on the equivalent category $\caC^\J$. This is a localization
of the projective model structure on $\caC^\J$, the local objects are those diagrams for which
the transition maps are weak equivalences. It follows that in the case of spaces this model structure
is the same as the $\J$-model structure introduced by Sagave-Schlichtkrull \cite{sagave-schlichtkrull}.
The weak equivalences in this model structure are precisely the maps of diagrams
of spaces which induce weak equivalences on homotopy colimits, see loc.~cit.

\section{Graded endomorphism objects}

Let the notation be as in the last section.
Let $f^K_\Spt \colon \caC^\J \to \Spt_K^\Sigma(\caD)$ be the functor introduced in section \ref{hgrfddd}.
Wer claim this is a left Quillen functor. Indeed, for a discrete group $G$
and a $G$-object $X$ in $\caD$ which is underlying
cofibrant the functor $\caD[G] \to \caD$, $Y \mapsto Y \otimes_G X$,
is a left Quillen functor.

Let $\caO$ be an operad in $\caC$, e.g.~an $E_\infty$-operad (for example
$\caC$ could be a simplicial symmetric monoidal model category, then we can take the image
of an $E_\infty$-operad in simplicial sets). By abuse of notation we will also
talk about $\caO$-algebras in $\caD$ and the categories of symmetric spectra.
By this we shall mean algebras over the respective image of $\caO$.

Suppose $\E$ is an $\caO$-algebra in $\Spt^\Sigma_K(\caD)$. Let $r$ be the right adjoint
to $f^K_\Spt$. Then $r(\E)$ is also an $\caO$-algebra. If $\E$ is underlying fibrant
then $r(\E)$ has the correct homotopy type. 
In the case where $\caO$ is $\Sigma$-cofibrant one can always achieve this
by using semi model structures on $\caO$-algebras.

In particular cases one can think about $r(\E)$ as a graded endomorphism
object of $\E$, e.g.~for Example \ref{hgfdsd} and in the motivic situation,
see subsection \ref{hgrfdd}.

\subsection{Grading by tensor invertible objects}
\label{fghghn}

We suppose given cofibrant objects $K_1,\ldots,K_n \in \caD$ such that
these are tensor invertible in $\Ho \caD$. We let $\Spt^\Sigma_{\underline{K}}(\caD)$
be the model category of symmetric $K_1,\ldots,K_n$-multi-spectra in $\caD$.

By our assumption and \cite[Theorem 9.1]{hovey.stable}
the adjunction $\caD \rightleftarrows \Spt^\Sigma_{\underline{K}}(\caD)$
is a Quillen equivalence.

By iterating the definition of the functor $f^K_\Spt$
we get an induced symmetric monoidal left Quillen functor
$$f^{\underline{K}}_\Spt \colon \caC^{\J^n} \to \Spt^\Sigma_{\underline{K}}(\caD).$$

Suppose that $\caC$ is simplicial such that we have good models
of $E_\infty$-operads.

The derived right adjoint of $f^{\underline{K}}_\Spt$ on the level of $E_\infty$-algebras induces a functor
from the homotopy category of $E_\infty$-algebras in $\caD$ to the homotopy
category of $E_\infty$-algebras in $\caC^{\J^n}$, which can be thought of as multi-graded
$E_\infty$-algebras in $\caC$, see section \ref{gfththt}.

\subsection{The motivic example}
\label{hgrfdd}

We specialize the construction of subsection \ref{fghghn} to the motivic situation.
Let $\sS^\mot$ be the model category of $\mathbf{P}^1$-spectra for a given base scheme.
We let $K_1$ and $K_2$ be cofibrant models for the motivic spheres
$S^{1,0}$ and $S^{0,1}$. For the category $\caC$ we either take simplicial
sets with the natural functor $f$ to $\sS^\mot$ or the category of motivic spaces,
i.e simplicial presheaves on smooth schemes over the base scheme with an
$\mathbb{A}^1$- and Nisnevich-local model structure. The functor $f$ in this
case is the $\mathbf{P}^1$-suspension functor followed by adding a basepoint.

Let $r'$ be the right adjoint to the functor
$$f^{K_1,K_2}_\Spt \colon \caC^{\J^2} \to \Spt^\Sigma_{K_1,K_2}(\sS^\mot)$$
defined in subsection \ref{fghghn}.

Then the image with respect to $r'$
of a (fibrant) motivic $E_\infty$-ring spectrum $\E$ in $\caC^{\J^2}$ is a bigraded
version of the endomorphism space of $\E$:

\begin{definition}
Let $\E$ be a motivic $E_\infty$-spectrum and $\E \to R\E$ a fibrant replacement.
Then $r'(R\E)$ is defined to be the (derived) bigraded endomorphism space of $\E$.
\end{definition}

One can extract a bigraded version of $\mathrm{GL}_1 \E$ by taking the (sectionwise) invertible
endomorphisms (in the case $\caC$ is motivic spaces one has to work with a fibrant model).

\begin{remark}
One feature of $\J$-spaces is that they allow for a positive (flat) model structure
with the property that commutative $\J$-space monoids carry a model structure,
see \cite{sagave-schlichtkrull}. It is natural to expect that these features
carry over to the motivic setting, so that we can talk about strict commutative
algebra objects in $\caC^{\J^2}$ instead about $E_\infty$-algebras.
We note that positive flat model structures on motivic symmetric spectra and
algebras over arbitrary operads in motivic symmetric spectra have been worked
out in \cite{hornbostel.preorientations}.
\end{remark}

In the motivic stable homotopy category there are more tensor invertible
elements than just the motivic spheres, see \cite{hu.picard}. So one may
enlarge the number of grading directions for the graded units of
a motivic $E_\infty$-spectrum.

There might be relations among the tensor invertible elements.
This will not be reflected in our version of graded endomorphisms and units. We leave
this question to future work.

\section{The $\infty$-categorical interpretation}
\label{gfththt}

It is proven in \cite{sagave-schlichtkrull} that the classifying space
$\mathrm{B} \J$ is a model for $Q S^0$. Suppose for the rest that our model
categories are additionally simplicial model categories.

Our framework for $\infty$-categories will be mainly the weak Kan complexes resp.
quasi-categories, see e.g. \cite{lurie.HTT}.

We consider a usual category as an $\infty$-category by the nerve construction,
by abuse of notation we also write $C$ for the $\infty$-category associated
to the category $C$.

We view a topological space as an $\infty$-category via the singular simplicial set functor,
in particular classifying spaces of categories are viewed in such a way as $\infty$-categories.

Let $K$ be a simplicial set and $S \subset K_1$ a subset of the edges.
We denote by $K[S^{-1}]$ the pushout
$$\xymatrix{\coprod_{s \in S} \Delta^1 \ar[r] \ar[d] & K \ar[d] \\
\coprod_{s \in S} \overline{\Delta^1} \ar[r] & K[S^{-1}]},$$
where $\overline{\Delta^1}$ denotes the nerve of the category with
two objects and one isomorphism between these.

We note that the above pushout is a homotopy pushout both in the usual model
structure on $\sSet$ and the Joyal model structure.
Also $K \to K[S^{-1}]$ is a usual weak equivalence in $\sSet$.

We set $K[K^{-1}]=K[K_1^{-1}]$.

\begin{lemma} \label{hgfdsdg}
Let $C$ be an $\infty$-category, $K \in \sSet$ and $S \subset K_1$. Then
the map $C^{K[S^{-1}]} \to C^K$ is a fully faithful map between $\infty$-categories
whose essential image consists of those functors $K \to C$ which send each
edge in $S$ to an equivalence.
\end{lemma}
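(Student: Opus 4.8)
The plan is to reduce the statement to two ingredients: first, that $C^{(-)}$ takes homotopy pushouts in the Joyal model structure to homotopy pullbacks of $\infty$-categories (since $C^{(-)} = \Map(-,C)$ is a right Quillen functor for the Joyal model structure when $C$ is a quasi-category), and second, an explicit analysis of the basic building block $\overline{\Delta^1}$ versus $\Delta^1$. First I would apply $C^{(-)}$ to the defining pushout square for $K[S^{-1}]$. Because that square is a homotopy pushout in the Joyal model structure (as noted just before the statement) and all four corners are cofibrant, the resulting square
$$\xymatrix{C^{K[S^{-1}]} \ar[r] \ar[d] & C^K \ar[d] \\ \prod_{s \in S} C^{\overline{\Delta^1}} \ar[r] & \prod_{s \in S} C^{\Delta^1}}$$
is a homotopy pullback of $\infty$-categories. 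So it suffices to understand the right-hand vertical map $C^{\overline{\Delta^1}} \to C^{\Delta^1}$, i.e. the single-edge case.

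For the single-edge case I would argue that the inclusion $\Delta^1 \to \overline{\Delta^1}$ (hitting, say, the unique non-degenerate edge, here also the codomain of the pushout map so orientation is fixed) is, after applying $C^{(-)}$, the inclusion of the full subcategory of $C^{\Delta^1} = \mathrm{Fun}(\Delta^1, C)$ spanned by the equivalences. Full faithfulness: a map $\overline{\Delta^1} \to C$ is determined by where it sends the two objects and the isomorphism, and a natural transformation between two such is the same data as a natural transformation between the underlying functors $\Delta^1 \to C$; more formally, $\overline{\Delta^1} = N(E)$ where $E$ is the contractible groupoid on two objects, $\Delta^1 \to N(E)$ is a monomorphism, and since $N(E)$ is obtained from $\Delta^1$ by adjoining an inverse edge and fillers, the restriction map on mapping spaces in $C^{(-)}$ is an equivalence. (One clean route: $N(E)$ is the localization $\Delta^1[\{e\}^{-1}]$ at its one non-degenerate edge — indeed this is the $K = \Delta^1$, $S = K_1$ instance — so one could even regard this as a base case proven directly from the definition of localization of quasicategories, e.g. via marked simplicial sets à la Lurie.) Essential image: a functor $\Delta^1 \to C$ extends to $\overline{\Delta^1} \to C$ (uniquely up to equivalence) iff the chosen edge is an equivalence in $C$ — one direction is clear since isomorphisms go to equivalences, and the converse is exactly the statement that an equivalence in a quasicategory admits a coherent inverse, with the space of such coherent inverses contractible.

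Having identified $C^{\overline{\Delta^1}} \hookrightarrow C^{\Delta^1}$ as a fully faithful inclusion onto the full subcategory of equivalences, I would take the product over $s \in S$ to get $\prod_{s \in S} C^{\overline{\Delta^1}} \hookrightarrow \prod_{s \in S} C^{\Delta^1}$, still fully faithful with essential image the functors sending each coordinate to an equivalence. Now I invoke the general fact that a homotopy pullback along a fully faithful functor is again fully faithful: in the homotopy pullback square above, $C^{K[S^{-1}]} \to C^K$ is the base change of the fully faithful map $\prod_s C^{\overline{\Delta^1}} \to \prod_s C^{\Delta^1}$ along the restriction map $C^K \to \prod_s C^{\Delta^1}$, hence fully faithful, and its essential image consists precisely of those objects of $C^K$ — i.e. functors $K \to C$ — whose images in $\prod_s C^{\Delta^1}$ lie in the essential image downstairs, i.e. those sending every $s \in S$ to an equivalence. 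This gives exactly the claim.

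The main obstacle I expect is the careful handling of the single-edge base case $C^{\overline{\Delta^1}} \to C^{\Delta^1}$ — making precise both full faithfulness and the characterization of the essential image — since this is where actual quasicategory theory enters (coherent inverses of equivalences, contractibility of the relevant spaces); everything after that is formal manipulation of homotopy pullbacks and right Quillen functors. One should also be mildly careful that the pushout in the definition of $K[S^{-1}]$ uses a specified orientation of each edge in $S$, so that the base case really is $\Delta^1 \to \overline{\Delta^1}$ and not some other map; but since the subcategory of equivalences in $C^{\Delta^1}$ does not depend on orientation, this does not affect the final answer.
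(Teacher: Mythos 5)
Your proposal is correct and follows essentially the same route as the paper: apply $C^{(-)}$ to the defining homotopy pushout to get a homotopy pullback of $\infty$-categories, reduce to the single-edge case $C^{\overline{\Delta^1}} \to C^{\Delta^1}$, and conclude by base change of a fully faithful functor. The only divergence is in the base case, where the paper verifies full faithfulness by an explicit homotopy pullback computation of mapping spaces in $C^{\Delta^1}$ together with the Joyal contractibility of $\overline{\Delta^1}$, whereas you gesture at adjoining an inverse edge and fillers (or the marked simplicial set machinery) --- a point you rightly flag as the place needing the most care.
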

\begin{proof}
We first prove the special case saying that $C^{\overline{\Delta^1}} \to C^{\Delta^1}$
is fully faithful with essential image the edges in $C$ which are equivalences.
Let $s \colon \Delta^1 \to C$ be an equivalence. Then $s$ factors through
the maximal Kan complex in $C$. Thus the lifting property in the usual model structure
shows that $s$ can be extended to a map $\overline{s} \colon \overline{\Delta^1} \to C$.
This shows the claim about the essential image.

Let $f,g \colon \Delta^1 \to C$ be equivalences,
$f \colon x \to y$, $g \colon w \to z$.
Then the claim about the fully faithfulness
follows from the fact that there is a homotopy pullback 
diagram
$$\xymatrix{\Map_{C^{\Delta^1}}(f,g) \ar[r] \ar[d] & \Map_C(y,z) \ar[d] \\
\Map_C(x,w) \ar[r] & \Map_C(x,z)}$$
in $\Ho \sSet$ where every map is an isomorphism
and that $\overline{\Delta^1} \to \mathrm{pt}$ is a Joyal equivalence (so that the
mapping space in $C^{\overline{\Delta^1}}$ can be computed as a mapping space in $C$).

We prove now the general statement. Since the defining square for $K[S^{-1}]$ is a homotopy
pushout square the square

$$\xymatrix{C^{K[S^{-1}]} \ar[r] \ar[d] & \prod_{s \in S} C^{\overline{\Delta^1}} \ar[d] \\
C^K \ar[r] & \prod_{s \in S} C^{\Delta^1}}$$
is a homotopy pullback square in the Joyal model structure.
By what we have already proved the right vertical arrow
is fully faithful with essential image collections of arrows such that each arrow is a weak
equivalence. The claim follows from the fact that homotopy pullbacks of fully
faithful maps are fully faithful with essential image those objects which map to the essential
image of the given map.
\end{proof}

\begin{lemma} \label{iuyfgh}
Let $K \in \sSet$, $K[K^{-1}] \to R$ a Joyal fibrant replacement. Then $R$ is a Kan
complex. In particular the map $K \to R$ is a fibrant replacement in the usual model
structure on $\sSet$.
\end{lemma}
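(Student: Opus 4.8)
The plan is to prove that $R$, being Joyal fibrant, is a quasi-category whose homotopy category is a groupoid, and then to invoke the standard fact (see \cite{lurie.HTT}) that a quasi-category with groupoidal homotopy category is a Kan complex. From there the ``in particular'' statement follows by two-out-of-three.

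First I would compute the homotopy category of $K[K^{-1}]$. Let $\Ho$ denote the left adjoint of the nerve $\mathsf{Cat}\to\sSet$. Since it preserves colimits, applying it to the defining pushout square for $K[K^{-1}]$ (taken with $S=K_1$) presents $\Ho(K[K^{-1}])$ as the pushout in $\mathsf{Cat}$ of the span $\coprod_{s}E \leftarrow \coprod_{s}[1] \to \Ho(K)$, where $[1]$ is the arrow category and $E=\Ho(\overline{\Delta^1})$ is the groupoid with two objects and one isomorphism between them. This pushout is exactly the category obtained from $\Ho(K)$ by formally adjoining an inverse to the image of every $1$-simplex of $K$. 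Since every morphism of $\Ho(K)$ is a finite composite of (images of) $1$-simplices, every morphism of $\Ho(K[K^{-1}])$ becomes invertible, i.e.\ $\Ho(K[K^{-1}])$ is a groupoid.

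Next, because $K[K^{-1}]\to R$ is a Joyal equivalence, $\Ho$ sends it to an equivalence of categories, so $\Ho(R)$ is equivalent to the groupoid $\Ho(K[K^{-1}])$ and is therefore itself a groupoid; as $R$ is Joyal fibrant it is a quasi-category, and hence a Kan complex by the cited characterization. For the last assertion, the excerpt already records that $K\to K[K^{-1}]$ is a weak equivalence in the usual model structure on $\sSet$, and every Joyal (i.e.\ categorical) equivalence is in particular a weak homotopy equivalence, so $K[K^{-1}]\to R$ is one as well; composing, $K\to R$ is a usual weak equivalence with Kan-complex target, hence a fibrant replacement in the usual model structure.

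The only genuinely non-formal inputs are the two standard facts invoked — that the Kan complexes are precisely the quasi-categories with groupoidal homotopy category, and that categorical equivalences are weak homotopy equivalences — so the step to get right is the bookkeeping identifying $\Ho(K[K^{-1}])$ with the localization of $\Ho(K)$ at all of its morphisms; everything else is a formal manipulation of adjoints and pushouts.
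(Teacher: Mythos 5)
Your proof is correct and follows essentially the same route as the paper's: both reduce to showing that the homotopy category of $R$ is a groupoid and then invoke \cite[Proposition 1.2.5.1]{lurie.HTT}. You simply make the key step more precise --- identifying $\mathrm{h}(K[K^{-1}])$ with the localization of $\mathrm{h}(K)$ at all its morphisms via the colimit-preserving left adjoint to the nerve --- where the paper asserts it directly, and you also spell out the ``in particular'' clause, which the paper leaves implicit.
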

\begin{proof}
By \cite[Proposition 1.2.5.1]{lurie.HTT} an $\infty$-category $C$ is a Kan complex
if and only if the homotopy category $\mathrm{h}C$ associated to $C$
is a groupoid. So we have to show that $\mathrm{h} R$ is a groupoid.
But by construction $\mathrm{h} R$ is generated as a category by the morphisms
and their inverses which come from $K$. These are all invertible, thus
the claim follows.
\end{proof}

We keep the notation from section \ref{gdsfgn}.

\begin{proposition} \label{jhgddd}
The local model structure on $\caC^\J$ models
the $\infty$-category $\cC^{\mathrm{B} \J}\simeq \cC^{Q S^0}$, where $\cC$ is the $\infty$-category
associated to $\caC$. 
\end{proposition}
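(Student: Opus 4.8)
The plan is to identify the local model structure on $\caC^\J$ with the localization of the projective model structure at the transition maps (as recalled in section \ref{gdsfgn}), and then to match this, on the level of underlying $\infty$-categories, with a localization of the diagram $\infty$-category $\cC^\J$. First I would recall the general principle that for a combinatorial (or cellular) model category $\caC$ presenting an $\infty$-category $\cC$, and a small category $I$, the projective model structure on $\caC^I$ presents the functor $\infty$-category $\cC^I = \cC^{\nerve I}$; moreover a left Bousfield localization of $\caC^I$ at a set of morphisms $W$ in the homotopy category presents the localization of $\cC^{\nerve I}$ at the image of $W$, i.e.\ the full subcategory of $W$-local objects. This is standard and can be cited from \cite{lurie.HTT}; I would then apply it with $I = \J$ and $W$ the set of transition maps $\Psi_{i,n,1}$ (more precisely their images under the representable-diagram functor), so that the $W$-local objects in $\cC^\J$ are exactly those functors $\J \to \cC$ sending every such transition map to an equivalence.

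Next I would observe that, since $\J$ is generated under composition by the standard maps $\Psi_{i,n,1}$ together with the isomorphisms in $\J$ (the $\Sigma_i \times \Sigma_n$ acting on $(\i,\n)$), a functor $\J \to \cC$ sends every $\Psi_{i,n,1}$ to an equivalence if and only if it sends \emph{every} morphism of $\J$ to an equivalence: isomorphisms always go to equivalences, and every morphism of $\J$ factors as an isomorphism composed with some $\Psi_{i,n,p}$, which is an iterate of the $\Psi_{\bullet,\bullet,1}$. Hence the $\infty$-category modelled by the local model structure on $\caC^\J$ is the full subcategory of $\cC^{\nerve \J}$ on those functors inverting all arrows, which by the universal property of the localization of a simplicial set at a set of edges is precisely $\cC^{\nerve(\J)[\J^{-1}]}$, using the notation $K[K^{-1}]$ introduced before Lemma \ref{hgfdsdg}. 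Here is where Lemma \ref{hgfdsdg} enters: it identifies $\cC^{K[S^{-1}]}$ with the full subcategory of $\cC^K$ on the functors inverting the edges in $S$, so applied to $K = \nerve \J$ and $S = K_1$ it gives exactly this full subcategory.

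It then remains to identify $\nerve(\J)[\J^{-1}]$ with $\B\J$, and hence with $QS^0$. By Lemma \ref{iuyfgh}, a Joyal fibrant replacement $R$ of $\nerve(\J)[\J^{-1}]$ is a Kan complex, and $\nerve \J \to R$ is a weak equivalence in the usual model structure on $\sSet$; since $\nerve \J \to R$ is also, by construction, a weak equivalence in $\sSet$ through $\nerve(\J)[\J^{-1}]$, the Kan complex $R$ is the usual fibrant replacement of $\nerve \J$, i.e.\ a model for the classifying space $\B\J$. Finally, the equivalence $\B\J \simeq QS^0$ is the theorem of Sagave--Schlichtkrull \cite{sagave-schlichtkrull} quoted at the start of this section, giving $\cC^{\nerve(\J)[\J^{-1}]} \simeq \cC^{\B\J} \simeq \cC^{QS^0}$ as desired.

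The main obstacle I anticipate is the first step: carefully justifying that the transfer of the stable model structure along the equivalence $\caC^\J \simeq \Spt_T^\Sigma(\caC^\Sigma)$ really is the Bousfield localization of the projective model structure at the transition maps (this is asserted in section \ref{gdsfgn} but I would want to be sure the presentation-of-localizations statement applies in the non-simplicial-set case, e.g.\ when $\caC$ is motivic spaces), and then that "presents a left Bousfield localization at $W$" translates on the $\infty$-categorical side into "the full subcategory of $W$-local objects of the presented functor $\infty$-category." Everything after that — the combinatorial fact that inverting the $\Psi_{i,n,1}$ inverts all of $\J$, and the assembly of Lemmas \ref{hgfdsdg} and \ref{iuyfgh} with the Sagave--Schlichtkrull computation — is essentially bookkeeping.
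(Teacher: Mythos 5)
Your proposal is correct and follows essentially the same route as the paper: present the projective model structure as $\cC^{\nerve\J}$, identify the local objects with functors inverting all morphisms of $\J$, apply Lemma \ref{hgfdsdg} to recognize this as $\cC^{K[K^{-1}]}$, then Lemma \ref{iuyfgh} and the Sagave--Schlichtkrull theorem to get $\cC^{\mathrm{B}\J}\simeq\cC^{QS^0}$. The only difference is that you spell out the combinatorial step (every morphism of $\J$ factors as an isomorphism after an iterate of the standard maps) that the paper compresses into ``one then checks.''
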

\begin{proof}
First observe that the projective model structure on $\caC^\J$ models the $\infty$-category
$\cC^\J$ by the strictification theorem \cite[Theorem 4.2.1]{toen-vezzosi.segal-topoi}.
One then checks that the local model structure on $\caC^\J$ models
the full subcategory $\cD$ of $\cC^\J$ consisting of functors which send all maps in $\J$ to equivalences.

Let $K$ be the nerve of $\J$. Then by Lemma \ref{hgfdsdg} the $\infty$-categories
$\cD$ and $\cC^{K[K^{-1}]}$ are canonically equivalent.

By Lemma \ref{iuyfgh} $K[K^{-1}]$ and $\mathrm{B} \J$ are canonically equivalent.
Thus we finally have
$\cD \simeq \cC^{K[K^{-1}]} \simeq \cC^{\mathrm{B} \J} \simeq \cC^{Q S^0}$.
\end{proof}

\begin{corollary}
The symmetric stabilization of the $\infty$-category $\cC^\Sigma$ with respect to the object
$(\emptyset,\unit,\emptyset,\emptyset, \ldots)$
(where we use the model category description for the symmetric stabilization)
is equivalent to the category $\cC^{Q S^0}$.
\end{corollary}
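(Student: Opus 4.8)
The plan is to combine the two main strands of the paper: the strict equivalence of Proposition~\ref{bgdedd} together with the model-structure transfer of Section~\ref{gdsfgn}, and the $\infty$-categorical identification of Proposition~\ref{jhgddd}. Recall that by Proposition~\ref{bgdedd} the category $\Spt_T^\Sigma(\caC^\Sigma)$ with $T=(\emptyset,\unit,\emptyset,\emptyset,\ldots)$ \emph{is} $\caC^\J$ as a symmetric monoidal category, and by the discussion in Section~\ref{gdsfgn} the stable model structure on $\Spt_T^\Sigma(\caC^\Sigma)$ transports to the local model structure on $\caC^\J$. So the content of the corollary is precisely that this local model structure on $\caC^\J$ presents $\cC^{QS^0}$ — which is exactly Proposition~\ref{jhgddd}.

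Concretely, I would proceed as follows. First, unwind the phrase ``symmetric stabilization of $\cC^\Sigma$ with respect to $(\emptyset,\unit,\emptyset,\ldots)$, using the model category description''. This is by definition the $\infty$-category underlying the stable model structure on $\Spt_T^\Sigma(\caC^\Sigma)$ from \cite{hovey.stable}, i.e.\ the localization of the $\infty$-category underlying $(\caC^\Sigma)^\Sigma$-modules over $\mathrm{Sym}(T)$ at the stable equivalences. Second, invoke Proposition~\ref{bgdedd} to identify this model category with $\caC^\J$ equipped with the transferred model structure, which by Section~\ref{gdsfgn} is the local model structure whose fibrant objects are the diagrams with invertible transition maps. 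Third, apply Proposition~\ref{jhgddd}, which says exactly that this local model structure on $\caC^\J$ models $\cC^{\mathrm{B}\J}\simeq\cC^{QS^0}$. Stringing these three identifications together gives the claim.

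The only subtlety — and thus the main obstacle — is making sure that the notion of ``symmetric stabilization'' intended in the corollary really does coincide with Hovey's stable model structure on $T$-spectra, rather than with some intrinsically $\infty$-categorical construction (e.g.\ inverting $\otimes T$ in the sense of Lurie's stabilization) that a priori one would have to compare. The corollary sidesteps this by the parenthetical remark ``where we use the model category description'', so in the write-up I would simply take this as the definition and note that the comparison with other models of stabilization is a separate (known) matter. With that convention fixed, the proof is a one-line concatenation of the cited results.

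\begin{proof}
By the parenthetical hypothesis, the symmetric stabilization in question is the $\infty$-category presented by the stable model structure of \cite{hovey.stable} on $\Spt_T^\Sigma(\caC^\Sigma)$, where $T=(\emptyset,\unit,\emptyset,\emptyset,\ldots)$. By Proposition~\ref{bgdedd} this model category is, as a monoidal category, the category $\caC^\J$; and as explained in Section~\ref{gdsfgn} the stable model structure transports under this equivalence to the local model structure on $\caC^\J$ (the localization of the projective model structure at the maps inducing equivalences on transition maps). By Proposition~\ref{jhgddd} the local model structure on $\caC^\J$ models the $\infty$-category $\cC^{\mathrm{B}\J}\simeq\cC^{QS^0}$. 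Composing these identifications yields the statement.
\end{proof}
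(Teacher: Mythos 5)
Your proof is correct and follows exactly the paper's own argument, which simply cites Propositions \ref{bgdedd} and \ref{jhgddd}; you merely spell out the intermediate model-structure transfer from Section \ref{gdsfgn} more explicitly. The observation about fixing the meaning of ``symmetric stabilization'' via the parenthetical is a reasonable clarification but does not change the substance.
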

\begin{proof}
Tis follows from Propositions \ref{bgdedd} and \ref{jhgddd}.
\end{proof}

Pretending suitable $\infty$-categorical universal properties one may say that this
result states that $\cC^{Q S^0}$ is the universal symmetric monoidal $\infty$-category
with all colimits and a closed tensor product over $\cC$ generated by one
tensor invertible object. This universal property should hold in some $\infty$-category 
of such symmetric monoidal $\infty$-categories where the morphisms are the
symmetric monoidal cocontinuous functors.
Note that $\cC^\Sigma$ should be the universal cocomplete symmetric monoidal $\infty$-category
generated by one object. This free oject in $\cC^\Sigma$ is $(\emptyset,\unit,\emptyset,\emptyset, \ldots)$.

\vskip.2cm

Let be given a symmetric monoidal cocontinuous functor $F \colon \cC \to \cD$ between
cocomplete closed symmetric monoidal $\infty$-categories. Let $K \in \cD$
be tensor invertible. 
Suppose $\cC^{Q S^0}$ satisfies the universal property mentioned in the last paragraph.
Let $\cC^{Q S^0} \to \cD$ be the induced
symmetric monoidal functor sending the free tensor invertible object in $\cC^{Q S^0}$
to $K$. One may then ask if there exists
a factorization $\cC^{Q S^0} \to \cC^\integers \to \cD$. This should
express that the permutation actions on the $K^{\otimes n}$ are strictly
the identity, at least for the universal case where the functor $F$
is the identity.

Such a factorization for example exists for chain complexes
with $K=\integers[2]$ or motives with
$K=\integers(1)$ or $K=\integers(1)[2]$. This can be seen
by considering strong periodizations of the motivic
Eilenberg MacLane spectrum, see \cite{spitzweck.periodizable}.

There could also be intermediate factorizations, e.g.
through $\cC^{l_1(QS^0)}$. Here $l_1(QS^0)$ denotes the space obtained from $QS^0$
by killing all homotopy groups (of all connected components) above the first.

\bibliographystyle{plain}
\bibliography{J-spaces}

\vspace{0.1in}
\begin{center}
Department of Mathematics, University of Oslo, Norway.\\
e-mail: markussp@math.uio.no
\end{center}
\end{document}